\documentclass[11pt]{article}
\usepackage{amsmath,amsthm,amsfonts,amssymb}
\usepackage{verbatim}
\usepackage{latexsym}
\usepackage{nicefrac}
\usepackage{color}
\usepackage{enumitem}
\usepackage{mdwlist}
\usepackage[british]{babel}
\usepackage[utf8]{inputenc}
\usepackage{soul}
\usepackage{gensymb}
\usepackage[maxbibnames=3,backend=bibtex,style=numeric,sorting=nty,firstinits=true,isbn=false,doi=false]{biblatex}
\usepackage{changepage}
\usepackage{mathtools}
\usepackage{hyperref}
\usepackage{authblk}
\hypersetup{pdfborder=0 0 0.6}

\newcommand{\R}{{\mathbb{R}}}
\newcommand{\C}{{\mathbb{C}}}

\newcommand{\N}{\mathbb{ N}}
\newcommand{\Z}{\mathbb{ Z}}
\newcommand{\T}{\mathbb{ T}}

\newcommand{\supp}{\operatorname{supp}}

\newtheorem{theorem}{Theorem}[section]
\newtheorem{lemma}[theorem]{Lemma}

\newtheorem{definition}[theorem]{Definition}
\newtheorem{corollary}[theorem]{Corollary}

\newtheorem{example}[theorem]{Example}
\newtheorem{examples}[theorem]{Examples}

\newtheorem{remarks}[theorem]{Remarks}

\begin{document}
\title{Uncertainty Principles as a Tool for STFT Phase Retrieval}
\author{David Bartusel}
\date{}
\affil{\small Fachgruppe Mathematik, RWTH Aachen University, D-52056 Aachen, Germany}
\affil{\small bartusel@mathematik.rwth-aachen.de}

\maketitle

\begin{abstract}
In the finite-dimensional setting, it is known that STFT phase retrieval is always possible when the window's ambiguity function does not vanish. However, it is not known how many zeros are allowed in the ambiguity function for the window still to allow phase retrieval.
In order to tackle this problem, we first consider a two-window approach where the second window equals the Fourier transform of the first window. This allows us to apply the uncertainty principle in order to obtain sufficient conditions for phase retrieval.
Using the relation between STFT phase retrieval and ambiguity sampling, we can prove sufficient conditions for the single-window phase retrieval problem, showing that only approximately eight ninths of the entries of the window's ambiguity function (and only three quarters in prime dimensions) are required to be nonzero.
\end{abstract}

\noindent {\small {\bf Keywords:} phase retrieval; short-time Fourier transform; ambiguity function; uncertainty principle}

\noindent{\small {\bf AMS Subject Classification:} 42A38; 94A12; 94A20; 47J06; 20K01.}

\section{Introduction}

In the phase retrieval problem for the finite-dimensional short-time Fourier transform (STFT), mainly motivated by the field of ptychography \cite{FKMS}, one seeks to recover a signal $f\in\C^d$ from the absolute values of
\begin{equation*}
V_gf(k,l)=\sum_{j=0}^{d-1} f_j\overline{g_{j-k}}e^{\nicefrac{-2\pi i jl}{d}}\qquad (k,l\in\mathbb{Z}_d),
\end{equation*}
where $g\in\C^d$ is a known window function.

Aside from stability estimates for specific window and signal classes \cite{AWdis, CHLSS}, a main interest lies in the question which windows allow for the reconstruction of \textit{every} signal.
Conditions for suitable windows are usually given in terms of the support of the ambiguity function $V_gg$. It is well-known that phase retrieval is always possible if the window's ambiguity function vanishes nowhere \cite{BF}.
However, as pointed out in \cite{phasebook}, there are only few answers to the question how many (and which) entries of $V_gg$ are allowed to vanish for $g$ still to do phase retrieval: The authors of \cite{BF} have shown that phase retrieval is impossible whenever a full column $V_gg(k,\cdot)$ of the window's ambiguity function vanishes. Using the time-frequency duality of the problem (see Lemma \ref{lm:STFTid} \ref{STFTdual} below), it is easy to see that phase retrieval is equally impossible in the case of a vanishing row $V_gg(\cdot,l)$ (cf. also \cite{MR-backgr} for the case $l=0$). On the other hand, it is shown in \cite{PRinj} that there are certain subsets $M$ of cardinality $1$ or $2$ such that the condition $\supp\left(V_gg\right)=\Z_d^2\setminus M$ is sufficient for phase retrieval. The main goal of this paper is to improve this result by identifying sets $M=M_d$ of scaling cardinality $\left|M_d\right|\propto d^2$ such that $\supp\left(V_gg\right)=\Z_d^2\setminus M_d$ is still sufficient for phase retrieval.\\

As a relaxation of the \textit{global} phase retrieval problem, it is also common to study the question which signals can be recovered when using a window from a prescribed window class. Usually, the signals that can \textit{not} be recovered show some kind of localization \cite{AWdis,PRinj,CHLSS}.
However, by the uncertainty principle, a signal $f$ and its Fourier transform $\hat{f}$ cannot both be well-localized, which gives rise to the idea that combining the measurements $\left|V_gf\right|$ and $\left|V_g\hat{f}\right|$ might improve the chance of recovery significantly.
Another application of the time-frequency duality of the problem shows that this can be related to the recovery of $f$ from the measurements $\left|V_gf\right|$ and $\left|V_{\hat{g}}f\right|$. This leads to the question of multi-window STFT phase retrieval, which is another relaxation of the initial (single-window) phase retrieval problem, studied e.g. in \cite{CHLSS,WZ}. The more recent article \cite{AYFrF} addresses the multi-window approach in a continuous setting but is very similar to our approach in the sense that the authors study windows that are related to each other by rotation in the time-frequency plane (with the Fourier transform corresponding to a rotation of the time-frequency plane by 90\degree).

\subsection{Structure of the paper}

We will begin by discussing the two-window phase retrieval problem using a pair of windows $\left(g,\hat{g}\right)$. Using the uncertainty principle, we show that under the assumption $$\{-m,\dots,m\}\times\Z_d\subseteq\supp\left(V_gg\right)$$ for some $m>\frac{d-1}{3}$ (or $m>\frac{d-1}{4}$ if $d$ is prime), every $f\in\C^d$ can be recovered from $\left|V_gf\right|$ or from $\left|V_{\hat{g}}f\right|$ (Theorem \ref{thm:strtw-suff}). We will also give a straightforward example of a window satisfying $\supp\left(V_gg\right)=\{-m,\dots,m\}\times\Z_d$ (Example \ref{ex:exgtw}).
Additionally, we show that the uniqueness result does not carry over to the continuous setting of signals and windows in $L^2(\R)$ (Theorem \ref{thm:negcont})\\

Reinterpreting the two-window result in terms of sampling from $V_ff$, we show that every signal $f$ can be recovered from $V_ff\vert_{\Omega}$, where
\begin{equation*}
\Omega=\left(\{-m,\dots,m\}\times\Z_d\right)\cup\left(\Z_d\times\{-m,\dots,m\}\right)
\end{equation*}
for some $m>\frac{d-1}{3}$; if $d$ is prime it again suffices to take $m>\frac{d-1}{4}$ (Corollary \ref{cor:sampl-hole}). This also implies that a single window $g$ does phase retrieval whenever $\Omega\subseteq\supp\left(V_gg\right)$ (Corollary \ref{cor:PR-hole}).

For odd dimensions $d=2p+1$, we will give examples of windows satisfying\linebreak $\supp\left(V_gg\right)=\Z_d^2\setminus\{p,p+1\}$ or even $\supp\left(V_gg\right)=\Z_d^2\setminus\{p-1,p,p+1,p+2\}$ and thus doing phase retrieval by the previous result when $p$ is large enough (Examples \ref{ex:constr-windowhole}).

\subsection{Notation}

We fix a dimension $d\geq 2$. For a signal (or window) $f$, we write $f\in\C^d$ for short but actually think of the signal as a function $f:\Z_d\to\C^d$, where $\Z_d:=\Z/d\Z$.
For $j\in\Z$, we write $f_j:=f([j])$, i.e. all indices are interpreted modulo $d$.
For integers $k_1<k_2$, we will denote the range of indices between $k_1$ and $k_2$ as $\left\{k_1,\dots,k_2\right\}\coloneqq\{j\,|\,k_1\leq j\leq k_2\}\coloneqq\left\{[j]\,\middle|\,k_1\leq j\leq k_2\right\}$.

For $f\in\C^d$, we define the Fourier transform $\hat{f}:=\mathcal{F}f$ of $f$ by
\begin{equation*}
\hat{f}_l\coloneqq\sum_{j=0}^{d-1}f_jz_l^j
\end{equation*}
for all $l\in\Z_d$, where $z_l\coloneqq e^{\nicefrac{-2l\pi i}{d}}$.
Moreover, for $k,l\in\Z_d$, we define the translation $T_kf$ of $f$ by $\left(T_kf\right)_j=f_{j-k}$ for all $j\in\Z_d$ as well as the modulation $M_lf$ of $f$ by $\left(M_lf\right)_j=f_j\cdot z_l^{-j}$ for all $j\in\Z_d$.

For any finite set $M$, we use $|M|$ to denote the cardinality of $M$ and for a function $h:M\to\C$ (in the case where $M\neq\emptyset$), we denote the support of $h$ by $\supp(h)\coloneqq \{m\in M\,|\,h(m)\neq 0\}$.

Finally, we define the torus $\T\coloneqq\{z\in\mathbb{C}\,|\,|z|=1\}$.

\section{Strong two-window STFT phase retrieval}\label{sec:tw}

In this section, we show a sufficient condition for a window $g\in\C^d$ such that every signal $f\in\C^d$ can either be recovered from the measurement $\left|V_gf\right|$ or from the measurement $\left|V_{\hat{g}}f\right|$.
While this problem is interesting in its own right as a stronger form of multi-window phase retrieval, the result will be particularly useful when proving sufficient conditions for the single-window problem.\\

We begin by recalling the definition and some of the basic properties of the (finite-dimensional) short-time Fourier transform.

\begin{definition}
Let $f,g\in\C^d$. The short-time Fourier transform (STFT) of the signal $f$ with respect to the window $g$ is given by
\begin{equation*}
V_gf(k,l)=\left\langle f,M_lT_kg\right\rangle=\sum_{k=0}^{d-1} f_j\overline{g_{j-k}}e^{\nicefrac{-2\pi i jl}{d}}\qquad (k,l\in\mathbb{Z}_d).
\end{equation*}
\end{definition}

The following lemma contains some well-known but useful relations concerning the STFT.

\begin{lemma}\label{lm:STFTid}
Let $f,g\in\C^d$ as well as $k,l\in\Z_d$. Then, the following hold true.
\begin{enumerate}[label=\alph*)]
\item\label{ambigrel} $\mathcal{F}\left|V_gf\right|^2(l,-k)=d\cdot V_ff(k,l)\cdot V_gg(k,l)$
\item\label{ambigsym} $V_gg(-k,-l)=e^{\nicefrac{-2\pi ikl}{d}}\cdot\overline{V_gg(k,l)}$
\item\label{STFTdual} $V_{\hat{g}}{\hat{f}}(k,l)=d\cdot e^{\nicefrac{-2\pi ikl}{d}}\cdot V_gf(-l,k)$
\end{enumerate}
In \ref{ambigrel}, $\mathcal{F}$ denotes the two-variable Fourier transform.
\end{lemma}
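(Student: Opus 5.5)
All three identities are direct computations from the definitions. The plan is to expand both sides as finite sums and use the orthogonality relation $\sum_{j\in\Z_d} e^{2\pi i jm/d}=d\cdot\delta_{m,0}$.

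For \ref{ambigrel}, write $|V_gf(k,l)|^2=V_gf(k,l)\overline{V_gf(k,l)}$. Expanding gives
\begin{equation*}
|V_gf(k,l)|^2=\sum_{j,j'} f_j\overline{f_{j'}}\,\overline{g_{j-k}}g_{j'-k}e^{-2\pi i (j-j')l/d}.
\end{equation*}
I take the two-variable Fourier transform at the point $(l,-k)$, where the first variable pairs with $k$ and the second with $l$:
\begin{equation*}
\sum_{k',l'}|V_gf(k',l')|^2 e^{-2\pi i(k'l-l'k)/d}.
\end{equation*}
Summing over $l'$ first gives $d\cdot\delta_{j-j'=k}$, so $j'=j-k$. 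What remains is
\begin{equation*}
d\sum_{k',j} f_j\overline{f_{j-k}}\,\overline{g_{j-k'}}\,g_{j-k-k'}\,e^{-2\pi i k'l/d}.
\end{equation*}
I then substitute $n=j-k'$, which makes the sum factor. The exponent becomes $e^{-2\pi i (j-n)l/d}$, and this splits as
\begin{equation*}
\Big(\sum_j f_j\overline{f_{j-k}}e^{-2\pi i jl/d}\Big)\Big(\sum_n \overline{g_n}g_{n-k}e^{2\pi i nl/d}\Big).
\end{equation*}
The first factor is $V_ff(k,l)$. The second is the complex conjugate of $V_{\bar g}\,\bar g$-type expressions, i.e.\ $\overline{\sum_n g_n\overline{g_{n-k}}e^{-2\pi i nl/d}}=\overline{V_gg(k,l)}$. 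This is the main obstacle: the computation naturally produces $\overline{V_gg(k,l)}$, and whether it matches the stated $V_gg(k,l)$ depends on the sign convention of $\mathcal F$ and the ordering of the evaluation point. I would fix the convention so that the result reads exactly as stated; e.g.\ with the kernel $e^{+2\pi i(\cdot)}$ in the $l'$-variable the roles flip appropriately. Alternatively, \ref{ambigsym} can be used to rewrite the conjugate at $(-k,-l)$, absorbing the sign change into the evaluation point $(l,-k)$. I would carefully track this single sign, since everything else is bookkeeping.

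For \ref{ambigsym}, start from
\begin{equation*}
V_gg(-k,-l)=\sum_j g_j\overline{g_{j+k}}e^{2\pi i jl/d}.
\end{equation*}
Substituting $j=n-k$ gives
\begin{equation*}
\sum_n g_{n-k}\overline{g_n}e^{2\pi i nl/d}e^{-2\pi i kl/d}=e^{-2\pi ikl/d}\,\overline{V_gg(k,l)}.
\end{equation*}

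For \ref{STFTdual}, use Parseval in the form $\langle \hat f,\hat h\rangle=d\langle f,h\rangle$ together with the commutation rules
\begin{equation*}
\widehat{T_kh}=M_{-k}\hat h,\qquad \widehat{M_lh}=T_l\hat h,
\end{equation*}
which follow by reindexing with $z_l$. Inverting these gives $M_kT_l\,g$ with the Fourier transform applied, which lets me write
\begin{equation*}
V_{\hat g}\hat f(k,l)=\langle\hat f,M_lT_k\hat g\rangle=\langle \hat f,\widehat{h}\rangle,\qquad h=T_lM_{k}g\ \text{(up to signs)}.
\end{equation*}
Then $T_lM_k=z_k^{\,l}\,M_kT_l$ produces the phase $e^{-2\pi ikl/d}$. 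Parseval gives $d\langle f,M_kT_{-l}g\rangle=d\,V_gf(-l,k)$. Here too I would check the signs of the translation and modulation indices carefully.
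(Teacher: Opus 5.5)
The one step that fails is your proposed patch for part a). Your calculation there is correct, and what it proves is
\begin{equation*}
\mathcal{F}\left|V_gf\right|^2(l,-k)=d\cdot V_ff(k,l)\cdot\overline{V_gg(k,l)}.
\end{equation*}
The printed statement drops this conjugate, and no choice of convention restores it. Using part b) only rewrites your result as $d\,e^{2\pi ikl/d}\,V_ff(k,l)\,V_gg(-k,-l)$. Reversing the sign of the kernel conjugates the whole left-hand side, which gives $d\,\overline{V_ff(k,l)}\,V_gg(k,l)$.

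More decisively, take $f=g$. Then $|V_gg|^2$ is real and, by part b), invariant under $(k',l')\mapsto(-k',-l')$. Conjugating a character kernel amounts to exactly this reflection, so the two-variable Fourier transform of $|V_gg|^2$ is real at every point, whatever sign or ordering convention you use. But $d\,V_gg(k,l)^2$ is not real in general. For $d\geq 3$, $g=(1,1,0,\dots,0)$ and $(k,l)=(0,1)$, one gets $V_gg(0,1)=1+e^{-2\pi i/d}$, whose square $4\cos^2(\pi/d)\,e^{-2\pi i/d}$ is not real. So the identity as printed is false.

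Rather than tuning conventions, you should state and prove the conjugated identity, and note that it is all the paper needs. Only $\supp\left(V_gg\right)=\supp\left(\overline{V_gg}\right)$ enters the equivalence between measuring $|V_gf|$ and sampling $V_ff$ on $\supp\left(V_gg\right)$. Since $g$ is known, dividing by $\overline{V_gg}$ there works just as well.

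As for the route: the paper gives no computation. It cites \cite{GKR} for part a) and says b) and c) follow from the commutation relations and Plancherel. Your reindexing for b) and your Parseval argument for c) are exactly that, and both are correct. In c) the signs come out as follows: $M_lT_k\hat g=\widehat{T_{-l}M_kg}$ and $T_{-l}M_k=e^{2\pi ikl/d}M_kT_{-l}$. Parseval then gives $d\,e^{-2\pi ikl/d}\langle f,M_kT_{-l}g\rangle=d\,e^{-2\pi ikl/d}V_gf(-l,k)$, as stated. Your explicit double-sum computation for a) is a self-contained replacement for the citation, and it is precisely what exposes the missing conjugate.
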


\begin{proof}
We refer to \cite{GKR} for a proof of \ref{ambigrel}.
The symmetry identity \ref{ambigsym} and time-frequency duality \ref{STFTdual} are immediate consequences of the commutation relations between translation, modulation and Fourier transform as well as Plancherel's theorem. 
\end{proof}

Having introduced the STFT, we are now ready to state the problems at interest.
\pagebreak

\begin{definition}
Let $g,g_1,g_2\in\C^d$.
\begin{enumerate}[label=\alph*)]
\item A signal $f\in\C^d$ is called \textit{phase retrievable} w.r.t $g$ if
\begin{equation*}
\left|V_g\tilde{f}\right|=\left|V_gf\right|\quad\Rightarrow\quad \exists\gamma\in\T:~\tilde{f}=\gamma f
\end{equation*}
holds for all $\tilde{f}\in\C^d$.
\item The window $g$ is said to \textit{do phase retrieval} if every $f\in\C^d$ is phase retrievable w.r.t. $g$.
\item The windows $g_1,g_2\in\C^d$ are said to \textit{do strong two-window phase retrieval} if every $f\in\C^d$ is phase retrievable w.r.t. at least one of the windows $g_1$ or $g_2$.
\end{enumerate}
\end{definition}

The fact that we only ask a signal to be recovered up to a global phase factor is common and necessary whenever phase retrieval for linear operators (such as the STFT) is considered since the multiplication with such a constant clearly does not change the measurement.

We note that the ambiguity function relation Lemma \ref{lm:STFTid} \ref{ambigrel} is the reason for the well-known fact (cf. e.g. \cite[Lemma 2.1.3]{PRinj}) that the information obtained by measuring $\left|V_gf\right|$ is equivalent to sampling $V_ff$ on $\supp\left(V_gg\right)$, showing in particular that all phase retrieval properties of a window $g$ depend solely on the support of its ambiguity function.

In contrast to the classical (weak) two-window phase retrieval problem where the measurements $\left|V_{g_1}f\right|$ and $\left|V_{g_2}f\right|$ are combined, \textit{strong} two-window phase retrieval asks that every signal be recovered either from $\left|V_{g_1}f\right|$ or from $\left|V_{g_2}f\right|$. While weak versions of the results in this section would still suffice to prove the results in Section \ref{sec:ow}, we choose to state them in their strong versions in order to emphasize the effect of the uncertainty principle.\\

We will now focus on the strong-window phase retrieval problem with a pair of windows of the form $\left(g,\hat{g}\right)$ for some suitable $g\in\C^d$.
In order to motivate this choice, we recall that phase retrievability of a signal $f$ is related to ``holes'' in $\supp(f)$ when using a certain class of short windows (cf. \cite[Corollary 2.5]{AWdis}, \cite[Theorem 2.2.3]{PRinj}).

\begin{theorem}\label{thm:PRhole}
Let $m\in\N_0$ and $g\in\C^d$ be such that $\supp\left(V_gg\right)=\left\{-m,\dots,m\right\}\times\Z_d$.
Then, a signal $f\in\C^d$ is \emph{not} phase retrievable w.r.t. $g$ if and only if there exist $k\in\Z_d$ as well as $m<j\leq d-m-1$ satisfying
\begin{equation*}
\left\{0,j\right\}\subseteq\supp\left(T_kf\right)\subseteq\{0,\dots, j-m-1\}\cup\{j,\dots,d-m-1\}.
\end{equation*}
\end{theorem}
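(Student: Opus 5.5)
The plan is to turn the STFT measurement into the sampling problem of recovering $f$ from products $f_j\overline{f_{j-k}}$ with $|k|\le m$, and then read off the obstruction to phase retrieval as a disconnectedness condition on a graph built on $\supp(f)$.

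\textbf{Step 1 (reduction).} Recall the remark after the definitions, based on Lemma \ref{lm:STFTid} \ref{ambigrel}: knowing $\left|V_gf\right|$ is equivalent to knowing $V_ff$ on $\supp\left(V_gg\right)=\{-m,\dots,m\}\times\Z_d$. For fixed $k$, the map $l\mapsto V_ff(k,l)$ is the Fourier transform of $j\mapsto f_j\overline{f_{j-k}}$. Knowing the whole column $V_ff(k,\cdot)$ is therefore the same as knowing all products $f_j\overline{f_{j-k}}$, $j\in\Z_d$. Hence, for $f,\tilde f\in\C^d$,
\begin{equation*}
\left|V_g\tilde f\right|=\left|V_gf\right|\iff \tilde f_j\overline{\tilde f_{j-k}}=f_j\overline{f_{j-k}}\quad\text{for all } j\in\Z_d,\ |k|\le m .
\end{equation*}
Taking $k=0$ gives $|\tilde f_j|=|f_j|$, so $\tilde f$ and $f$ have the same support $S$.

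\textbf{Step 2 (graph interpretation).} Put a graph on $S$, joining $a,b\in S$ whenever their cyclic distance is at most $m$.

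Suppose this graph is connected. On each edge the product $f_a\overline{f_b}\neq 0$ is known, which fixes the phase of $f_a$ relative to $f_b$. Propagating along paths shows $\tilde f=\gamma f$ for a single $\gamma\in\T$. The case $f=0$ is trivially retrievable.

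Conversely, suppose the graph splits as $S=A\,\dot\cup\, B$ with no edges between $A$ and $B$. Set $\tilde f=f$ on $A$ and $\tilde f=-f$ on $B$. Every prescribed product either lies inside one block, where it is unchanged, or involves indices at cyclic distance $>m$, where it is zero for both $f$ and $\tilde f$. Since $A,B\neq\emptyset$, $\tilde f$ is not a unimodular multiple of $f$. So $f$ fails to be phase retrievable exactly when this graph is disconnected.

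\textbf{Step 3 (translating disconnectedness into the support condition).} This is the main bookkeeping step; the only care needed is with cyclic indices and the endpoint constraints on $j$.

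For indices on the cycle $\Z_d$ with edges of length at most $m$, disconnectedness means the support is separated by at least two cyclic runs of at least $m$ consecutive non-support indices. Choose $k$ so that $0$ is the first support element after one such gap; the indices $d-m,\dots,d-1$ of $T_kf$ then vanish. Let $j$ be the first support element after the next gap, so that $j-m,\dots,j-1$ vanish. This gives
\begin{equation*}
\{0,j\}\subseteq\supp(T_kf)\subseteq\{0,\dots,j-m-1\}\cup\{j,\dots,d-m-1\}.
\end{equation*}
The bounds on $j$ follow from the structure: $0\in\{0,\dots,j-m-1\}$ forces $j>m$, and $j\in\{j,\dots,d-m-1\}$ forces $j\le d-m-1$.

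Conversely, assume such $k$ and $j$ exist. For $a\in\{0,\dots,j-m-1\}$ and $b\in\{j,\dots,d-m-1\}$ we have
\begin{equation*}
b-a\ge m+1\quad\text{and}\quad a+d-b\ge m+1,
\end{equation*}
so the cyclic distance between $a$ and $b$ exceeds $m$. Both blocks meet the support, through $0$ and $j$, so the graph of $T_kf$ is disconnected. Translation invariance of the condition (translating $f$ only shifts indices, and the product conditions are shift-invariant) transfers this to $f$.
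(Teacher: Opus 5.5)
Your proof is correct. The paper gives no proof of its own and quotes the result from \cite[Corollary 2.5]{AWdis} and \cite[Theorem 2.2.3]{PRinj}. Your route is the natural one given the paper's framing: Step 1 is the sampling reformulation via Lemma \ref{lm:STFTid} \ref{ambigrel} that the paper states right after the definitions, and Steps 2--3 are the gap/connectivity characterization the paper alludes to when it speaks of ``holes'' in $\supp(f)$.

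Two spots would benefit from one more line each:
\begin{enumerate}
\item State explicitly why fewer than two runs of at least $m$ consecutive non-support indices force connectivity. All but at most one pair of cyclically consecutive support elements are then within distance $m$, so they form a spanning path.
\item In the forward direction of Step 3, the bounds on $j$ should come from the construction rather than from the inclusion they are meant to justify. The bound $j>m$ holds because otherwise $0\in\supp(T_kf)$ would lie in the vanishing run $j-m,\dots,j-1$. The bound $j\le d-m-1$ holds because $j\neq 0$, since the second gap differs from the one preceding $0$.
\end{enumerate}
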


It is clear that for larger $m$, only well-localized signals can fail to be phase retrievable. (In particular, every signal is phase retrievable once $m\geq\frac{d}{2}$, i.e. $\supp\left(V_gg\right)=\Z_d^2$.)
By the uncertainty principle, it is therefore reasonable to believe that whenever $m$ is large enough, for every $f\in\C^d$, at least one of the pair $\left(f,\hat{f}\right)$ is phase retrievable.
By the time-frequency duality, this leads directly to strong two-window phase retrieval for $g$ and $\hat{g}$.

\begin{lemma}\label{lm:strtw-dual}
Let $g\in\C^d$. Then, the following are equivalent.
\begin{enumerate}[label=(\roman*)]
\item The pair $\left(g,\hat{g}\right)$ does strong two-window phase retrieval.
\item For every $f\in\C^d$, at least one of the pair $\left(f,\hat{f}\right)$ is phase retrievable w.r.t. $g$.
\end{enumerate}
\end{lemma}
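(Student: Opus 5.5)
The plan is to show that, for every $f\in\C^d$, $f$ is phase retrievable w.r.t.\ $\hat{g}$ if and only if $\hat{f}$ is phase retrievable w.r.t.\ $g$, up to a harmless reflection. Given this, (i) and (ii) are the same statement quantified over all $f$. The tool is the time-frequency duality of Lemma \ref{lm:STFTid} \ref{STFTdual}, which relates $V_{\hat g}\hat f$ to $V_gf$ with the coordinates swapped and a unimodular factor.

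\textbf{Step 1.} Apply Lemma \ref{lm:STFTid} \ref{STFTdual} with $f$ replaced by $\hat{f}$. Since $\hat{\hat{f}}=d\cdot Rf$ with $(Rf)_j=f_{-j}$, this gives
\begin{equation*}
\left|V_{\hat g}\hat f(k,l)\right| = d\left|V_gf(-l,k)\right|.
\end{equation*}
It is more convenient to use the version with $g$ and $\hat g$ in the other roles:
\begin{equation*}
\left|V_{\hat{\hat g}}\hat{h}(k,l)\right|=d\left|V_{\hat g}h(-l,k)\right| \quad\text{for all } h\in\C^d,
\end{equation*}
where $\hat{\hat g}=d\,Rg$. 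The reflection $R$ is harmless because $V_{Rg}(Rh)(k,l)=V_gh(-k,-l)$, which follows directly from the definition. Combining these, there is a fixed bijection $\sigma$ of $\Z_d^2$ and a constant $c>0$ such that
\begin{equation*}
\left|V_{\hat g}h\right| = c\left|V_g\hat h\right|\circ\sigma \quad\text{for all } h\in\C^d.
\end{equation*}
Precisely, $\left|V_{\hat g}h(-l,k)\right|$ equals a constant times $\left|V_g(R\hat h)(k,l)\right|=\left|V_g\hat h(-k,-l)\right|$.

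\textbf{Step 2.} Because $\mathcal F$ is a linear bijection and $\sigma$ is a bijection, the identity of Step 1 gives, for all $h,\tilde h$,
\begin{equation*}
\left|V_{\hat g}\tilde h\right|=\left|V_{\hat g}h\right| \iff \left|V_g\hat{\tilde h}\right|=\left|V_g\hat h\right|.
\end{equation*}
Moreover, $\tilde h=\gamma h$ if and only if $\hat{\tilde h}=\gamma\hat h$. Every $\tilde F\in\C^d$ is of the form $\hat{\tilde h}$, so $h$ is phase retrievable w.r.t.\ $\hat g$ if and only if $\hat h$ is phase retrievable w.r.t.\ $g$.

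\textbf{Step 3.} Now conclude. For fixed $f$, the condition "$f$ is phase retrievable w.r.t.\ $g$ or w.r.t.\ $\hat g$" is, by Step 2, equivalent to "$f$ or $\hat f$ is phase retrievable w.r.t.\ $g$". Quantifying over all $f$ gives (i)$\iff$(ii).

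The only delicate point is the bookkeeping in Step 1: tracking the double Fourier transform, the reflection, and the index permutation correctly. None of these affect the argument, since only moduli and bijectivity matter.
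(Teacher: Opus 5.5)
Your proposal has a genuine gap. The identity in Step~1, $\left|V_{\hat g}h\right| = c\left|V_g\hat h\right|\circ\sigma$ for all $h$, is false in general, and the error is in the ``harmless reflection''.

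\textbf{Where Step~1 fails.} Applying Lemma \ref{lm:STFTid} \ref{STFTdual} with $\hat g$ in place of $g$ gives
\[
\left|V_{\hat g}h(-l,k)\right| = \left|V_{Rg}\hat h(k,l)\right|,
\]
so the reflection sits on the \emph{window}. Your identity $V_{Rg}(Rh)(k,l)=V_gh(-k,-l)$ can only move $R$ off the window by putting it onto the signal, which yields $V_{Rg}\hat h(k,l)=V_g(R\hat h)(-k,-l)$. Your final equality $\left|V_g(R\hat h)(k,l)\right| = \left|V_g\hat h(-k,-l)\right|$ is wrong: the correct relation is $V_g(Ru)(k,l)=V_{Rg}u(-k,-l)$.

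No fixed permutation can repair this. Suppose $\left|V_{Rg}F\right| = c\left|V_gF\right|\circ\sigma'$ held for all $F\in\C^d$. Then $\left|\langle F,Rg\rangle\right| = c\left|\langle F,M_{l'}T_{k'}g\rangle\right|$ for all $F$, which forces $Rg$ to be a unimodular multiple of $c$ times a time-frequency shift of $g$. This fails, for example, for $d=5$ and $g=(1,2,3,0,0)$, where $\left|Rg\right|=(1,0,0,3,2)$ is not proportional to any cyclic shift of $(1,2,3,0,0)$.

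\textbf{Consequence for Step~2.} The claim that $h$ is phase retrievable w.r.t.\ $\hat g$ if and only if $\hat h$ is phase retrievable w.r.t.\ $g$ is not established by your argument. It happens to be true, but only through an extra ingredient you never invoke. One needs that $\left|V_g\tilde F\right|=\left|V_gF\right|$ if and only if $V_{\tilde F}\tilde F=V_FF$ on $\supp(V_gg)$, which follows from Lemma \ref{lm:STFTid} \ref{ambigrel}. One also needs $\supp(V_{Rg}Rg)=-\supp(V_gg)=\supp(V_gg)$, which follows from \ref{ambigsym}.

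\textbf{The simple fix.} The detour through $\hat{\hat g}$ is unnecessary. Your very first display, which is just Lemma \ref{lm:STFTid} \ref{STFTdual}, already shows that $\hat f$ is phase retrievable w.r.t.\ $\hat g$ if and only if $f$ is phase retrievable w.r.t.\ $g$. Since $\mathcal F$ is bijective, (i) is equivalent to ``for every $f$, $\hat f$ is phase retrievable w.r.t.\ $g$ or w.r.t.\ $\hat g$''. Substituting the previous equivalence turns this into (ii); this is exactly the paper's proof.

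This route gives the equivalence only after quantifying over all $f$. The per-signal equivalence you assert in Step~3 would additionally require the support argument above.
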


\begin{proof}
Since the Fourier transform is one-to-one, statement (i) is equivalent to the fact that for every $f\in\C^d$, the signal $\hat{f}$ is phase retrievable w.r.t. at least one of the windows $g$ or $\hat{g}$.
By Lemma \ref{lm:STFTid} \ref{STFTdual}, $\hat{f}$ being phase retrievable w.r.t. $\hat{g}$ is equivalent to $f$ being phase retrievable w.r.t. $g$. This shows that the two statements are equivalent.
\end{proof}

Now, we are ready to apply the uncertainty principle to obtain a sufficient condition for strong-window phase retrieval using a pair $\left(g,\hat{g}\right)$.

\begin{theorem}\label{thm:strtw-suff}
Let $m^{\ast}:=\frac{d-1}{4}$ if $d$ is prime and $m^{\ast}:=\frac{d-1}{3}$ otherwise. Moreover, let $m>m^{\ast}$ and $g\in\C^d$ be such that $\{-m,\dots,m\}\times\Z_d\subseteq\supp\left(V_gg\right)$. Then, the pair $\left(g,\hat{g}\right)$ does strong two-window phase retrieval.
\end{theorem}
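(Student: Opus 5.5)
The plan is to combine Lemma \ref{lm:strtw-dual}, the characterization in Theorem \ref{thm:PRhole}, and an uncertainty principle. Two uncertainty principles are needed: Chebotarev/Tao's $|\supp f|+|\supp\hat f|\geq d+1$ for prime $d$, and an elementary polynomial-root count for general $d$.

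\textbf{Reduction to a window with exact support.} Phase retrievability w.r.t.\ $g$ depends only on $\supp(V_gg)$, since $|V_gf|$ is equivalent to knowing $V_ff$ on $\supp(V_gg)$ by Lemma \ref{lm:STFTid} \ref{ambigrel}. Enlarging the sampling set can only help. Hence if $f$ is phase retrievable w.r.t.\ some window $h$ with $\supp(V_hh)=\{-m,\dots,m\}\times\Z_d$, it is also phase retrievable w.r.t.\ $g$. Such an $h$ exists by Example \ref{ex:exgtw}. Alternatively, one can read Theorem \ref{thm:PRhole} directly as a statement about determining $V_ff$ from its samples on $\{-m,\dots,m\}\times\Z_d$. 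By Lemma \ref{lm:strtw-dual}, it then suffices to show that $f$ and $\hat f$ cannot both be non-phase-retrievable w.r.t.\ $h$. Suppose both are, for some $f\neq 0$.

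\textbf{Extracting support information.} Apply Theorem \ref{thm:PRhole} to $f$. After a translation, which changes neither $|\supp f|$ nor $|\supp\hat f|$, we get
\[
\supp(f)\subseteq\{0,\dots,j-m-1\}\cup\{j,\dots,d-m-1\}.
\]
This has two consequences.
\begin{enumerate}[label=(\arabic*)]
\item $f$ vanishes on the two disjoint arcs $\{j-m,\dots,j-1\}$ and $\{d-m,\dots,d-1\}$. Thus $f$ has at least $2m$ zeros and $|\supp f|\leq d-2m$.
\item $\supp f\subseteq\{0,\dots,d-m-1\}$, a cyclic interval of length $d-m$.
\end{enumerate}
The same two facts hold for $\hat f$, up to its own translation.

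\textbf{Deriving a contradiction.}
\begin{itemize}
\item \emph{Prime $d$.} Chebotarev's theorem gives $d+1\leq|\supp f|+|\supp\hat f|\leq 2(d-2m)$. This forces $m\leq\frac{d-1}{4}$, a contradiction.
\item \emph{General $d$.} The product bound $|\supp f|\cdot|\supp\hat f|\geq d$ is too weak for large $d$, so I would use the interval structure instead. Since $f$ is supported in $\{0,\dots,d-m-1\}$, we can write $\hat f_l=P(z_l)$ with $P(z)=\sum_{j=0}^{d-m-1}f_jz^j$. This is a nonzero polynomial of degree at most $d-m-1$, and the $z_l$ are distinct. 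Hence $\hat f$ has at most $d-m-1$ zeros. On the other hand, $\hat f$ is non-phase-retrievable, so by (1) applied to $\hat f$ it has at least $2m$ zeros. This gives $2m\leq d-m-1$, i.e.\ $m\leq\frac{d-1}{3}$, a contradiction.
\end{itemize}

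\textbf{Main obstacle.} The key point is recognizing that the standard Donoho--Stark product bound does not suffice in the non-prime case. The interval-localization of $f$ has to be used through the root count. A second, minor point is the justification that containment (rather than equality) of supports suffices. This follows from the monotonicity of phase retrievability in $\supp(V_gg)$, which in turn follows from the ambiguity relation.
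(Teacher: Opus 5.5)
Your proof is correct and is essentially the paper's argument: in general you compare the root count of $\hat f_l=\sum_{j=0}^{d-m-1}f_jz_l^j$ with the $2m$ zeros of $\hat f$ forced by Theorem \ref{thm:PRhole}, and for prime $d$ you use Tao's bound $|\supp(f)|+|\supp(\hat f)|\geq d+1$. Your explicit monotonicity reduction from $\{-m,\dots,m\}\times\Z_d\subseteq\supp(V_gg)$ to a window $h$ with equality fills a step the paper leaves implicit when it invokes Theorem \ref{thm:PRhole}. Note that Example \ref{ex:exgtw} supplies $h$ only for $d\geq 6$ and $m<\frac{d-1}{2}$; in every remaining case the containment already forces $\supp(V_gg)=\{-m,\dots,m\}\times\Z_d=\Z_d^2$, so $h=g$ works.
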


\begin{proof}
Suppose that the pair $\left(g,\hat{g}\right)$ does not do strong two-window phase retrieval. By Lemma \ref{lm:strtw-dual}, this implies the existence of $f\in\C^d$ such that neither $f$ nor $\hat{f}$ is phase retrievable w.r.t. $g$, i.e. both $f$ and $\hat{f}$ satisfy the conditions given in Theorem \ref{thm:PRhole}.
In particular, $\hat{f}$ has at least $2m$ zeros and we can assume w.l.o.g. that $\supp(f)\subseteq\{0,\dots,d-m-1\}$. The latter implies
\begin{equation}\label{eq:Fourier-cut}
\hat{f}_l=\sum_{j=0}^{d-m-1} f_jz_l^j
\end{equation}
for every $l\in\Z_d$ and therefore, $\hat{f}$ can have at most $d-m-1$ zeros. Altogether, we have $2m\leq d-m-1$, which contradicts $m>\frac{d-1}{3}$.\\
If $d$ is prime, we can use the stronger uncertainty principle
\begin{equation*}
\left|\supp(f)\right|+\left|\supp\left(\hat{f}\right)\right|\geq d+1
\end{equation*}
from \cite[Theorem 1.1]{Tao-UP}. Since both $f$ and $\hat{f}$ have at least $2m$ zeros, it follows $d+1\leq 2d-4m$, contradicting $m>\frac{d-1}{4}$.
\end{proof}

We make a couple of remarks regarding the application of the uncertainty principle.

\begin{remarks}\mbox{}
\begin{enumerate}[label=\arabic*)]
\item In the proof of Theorem \ref{thm:strtw-suff}, one could also try to apply the classical Donoho-Stark uncertainty principle \cite[Theorem 1]{DS-unc}, which states that $$\left|\supp(f)\right|\cdot\left|\supp\left(\hat{f}\right)\right|\geq d,$$ instead of taking into account the specific positions of the zeros. However, this would only yield a contradiction when assuming $m>\frac{d-\sqrt{d}}{2}$ which is a strictly stronger assumption than $m>\frac{d-1}{3}$ once $d>4$.
\item We conjecture that the condition $m>\frac{d-1}{3}$ for the general case might be improved further by a more in-depth analysis of \eqref{eq:Fourier-cut}. In the proof of Theorem \ref{thm:strtw-suff}, we have not used the fact that there are at least another $m$ consecutive zero entries within $f_0,\dots,f_{d-m-1}$, giving the polynomial in question even more structure. While a maximum number of $d-m-1$ zeros in $\T$ can still be achieved under this restriction (e.g. for $f_{d-m-1}=1$, $f_0=-1$, and all other coefficients being zero), we do currently not know how many of those can in general be of the form $z_l$ for some $l$ and thus correspond to a zero of $\hat{f}$ as well as being arranged in such a way that they form two pairs of $m$  \emph{consecutive} zeros. On the other hand, the condition $m>\frac{d-1}{4}$ for prime dimensions is sharp as is shown by the following corollary.
\end{enumerate}
\end{remarks}

\begin{corollary}\label{cor:primesharp}
Assume that $d$ is prime. Moreover, let $m\in\N$ and $g\in\C^d$ be such that $\supp\left(V_gg\right)=\{-m,\dots,m\}\times\Z_d$. Then, the pair $\left(g,\hat{g}\right)$ does strong two-window phase retrieval if and only if $m>\frac{d-1}{4}$.
\end{corollary}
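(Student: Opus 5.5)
The \emph{if} direction is immediate from Theorem \ref{thm:strtw-suff}: $d$ is prime, $\{-m,\dots,m\}\times\Z_d=\supp\left(V_gg\right)$, and $m>\frac{d-1}{4}=m^\ast$. So all the work is in the \emph{only if} direction. Assume $m\leq\frac{d-1}{4}$. By Lemma \ref{lm:strtw-dual}, it suffices to exhibit a single $f\in\C^d$ such that neither $f$ nor $\hat{f}$ is phase retrievable w.r.t. $g$. By Theorem \ref{thm:PRhole}, this means that, after suitable translations, both $f$ and $\hat{f}$ have the support pattern
\begin{equation*}
\{0,j\}\subseteq\supp\subseteq\{0,\dots,j-m-1\}\cup\{j,\dots,d-m-1\}
\end{equation*}
for some $m<j\leq d-m-1$.

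My plan is to prescribe, for both $f$ and $\hat{f}$, two blocks of $m$ consecutive zeros separated by nonzero entries, and to realise this with Tao's uncertainty principle used in its existence form. Fix $j$ roughly $d/2$, say $j=2m+1$ or another value with $m<j\leq d-m-1$, and set
\begin{equation*}
Z=\{j-m,\dots,j-1\}\cup\{d-m,\dots,d-1\},
\end{equation*}
a set of $2m$ indices. I want a nonzero $f$ with $f|_Z=0$ and $\hat{f}|_{Z'}=0$, where $Z'$ is a translate of the same shape. In prime dimension, the uncertainty principle of \cite{Tao-UP} is accompanied by the statement that for any sets $A,B\subseteq\Z_d$ with $|A|+|B|\geq d+1$ there exists $f$ with $\supp(f)=A$ and $\supp(\hat{f})=B$ exactly. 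This follows from Chebotarev's theorem that every minor of the Fourier matrix is nonzero, via a genericity argument in which the relevant polynomials are nonzero, so a generic element of the solution space has full support. Take $A=\Z_d\setminus Z$ and $B=\Z_d\setminus Z'$. Then
\begin{equation*}
|A|+|B|=2d-4m\geq d+1
\end{equation*}
precisely because $m\leq\frac{d-1}{4}$. With exact supports, $\{0,j\}\subseteq A$ holds, and likewise for $B$ with the corresponding $j'$ and translate. Hence both $f$ and $\hat{f}$ satisfy the hypothesis of Theorem \ref{thm:PRhole}, so neither is phase retrievable. Note that $m\geq1$ is needed for $Z$ to be nonempty, which is why $m\in\N$ is assumed.

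The main obstacle is to justify the existence of $f$ with exactly prescribed supports, and not merely some nonzero $f$ vanishing on $Z$ with $\hat{f}$ vanishing on $Z'$. The plan is as follows.
\begin{enumerate}[label=\arabic*)]
\item Parametrise the space $V=\{f:\supp f\subseteq A,\ \hat{f}|_{Z'}=0\}$. Its dimension is $|A|-2m\geq1$, because the restriction $f\mapsto\hat{f}|_{Z'}$ on vectors supported in $A$ is given by a Fourier submatrix of size $2m\times|A|$, which has full rank $2m$ by Chebotarev.
\item For each coordinate $a\in A$, the functional $f\mapsto f_a$ is not identically zero on $V$. Otherwise every $f\in V$ would have support in $A\setminus\{a\}$, and a Chebotarev dimension count gives a contradiction, since the dimension of that smaller space is strictly less.
\item Do the same for each coordinate $b\in B$ with $f\mapsto\hat{f}_b$.
\item Since $V$ is a complex vector space and is not a finite union of proper subspaces, a generic $f\in V$ is nonzero at all coordinates in $A$ and all frequencies in $B$.
\end{enumerate}
Alternatively, I would simply cite \cite{Tao-UP}, where this exact-support statement appears as the sharpness part of Theorem 1.1.
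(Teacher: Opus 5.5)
Your proposal is correct and follows the paper's proof: the ``if'' direction comes from Theorem~\ref{thm:strtw-suff}, and the ``only if'' direction uses the existence half of \cite[Theorem 1.1]{Tao-UP} to build an $f$ such that both $f$ and $\hat{f}$ have the hole pattern of Theorem~\ref{thm:PRhole}, then concludes with Lemma~\ref{lm:strtw-dual}. The only difference is cosmetic: the paper uses the smallest admissible set $S$, with $|S|=\frac{d+1}{2}$, as the common support of $f$ and $\hat{f}$, whereas you use the largest sets $\Z_d\setminus Z$ of size $d-2m$ and also sketch the Chebotarev argument behind Tao's existence statement; your choice works equally well.
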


\begin{proof}
By Theorem \ref{thm:strtw-suff}, it remains to show that strong two-window phase retrieval is impossible in the case $m\leq\frac{d-1}{4}$. Since this is clear for $d=2$, we can assume that $d$ is odd.
In this case, we consider the set
\begin{equation*}
S\coloneqq\left\{0,\dots,\left\lfloor\frac{d+1}{4}\right\rfloor-1\right\}\cup\left\{\left\lfloor\frac{d+1}{4}\right\rfloor+m,\dots,\frac{d+1}{2}+m-1\right\}.
\end{equation*}
Clearly, we have $|S|=\frac{d+1}{2}$. By \cite[Theorem 1.1]{Tao-UP}, there exists $f\in\C^d$ satisfying\linebreak $\supp(f)=\supp\left(\hat{f}\right)=S$. Since $\max(S)\leq d-1-m$, both $f$ and $\hat{f}$ satisfy the condition from Theorem \ref{thm:PRhole} and by Lemma \ref{lm:strtw-dual}, the pair $\left(g,\hat{g}\right)$ cannot do strong two-window phase retrieval.
\end{proof}

The role of the pair $\left(g,\hat{g}\right)$ in Theorem \ref{thm:strtw-suff} can be interpreted as follows: Depending on the signal, we can choose whether to reconstruct $f$ in time domain or in frequency domain. By the uncertainty principle, at least one of these is always possible, given the assumptions on $g$.
Since $\supp(f)$ alone determines whether $f$ can be reconstructed in time domain, one can use a ``test measurement'' of $\left|V_gf(0,\cdot)\right|$ in order to decide in which domain to perform the reconstruction.\\

For Theorem \ref{thm:strtw-suff} to carry any meaning, it is necessary to show the existence of a window $g\in\C^d$ satisfying $\{-m,\dots,m\}\times\Z_d\subseteq\supp\left(V_gg\right)$ for some $m^{\ast}<m<\frac{d-1}{2}$, where $m^{\ast}$ is as in the theorem. (For $m\geq\frac{d-1}{2}$, the condition already implies $\supp\left(V_gg\right)=\Z_d^2$, in which case $g$ is known to do phase retrieval.) Such an $m$ exists if and only if $d\geq 6$.

In order to construct suitable windows, we turn to the idea of exponential windows as in \cite{PhD-DB,IVW,IMPV}.

\begin{example}\label{ex:exgtw}
Let $d\geq 6$ as well as $m^{\ast}<m<\frac{d-1}{2}$, where $m^{\ast}$ is as in Theorem \ref{thm:strtw-suff}. Let $V\subseteq\{0,\dots,m\}$ be such that $$\left\{|j-k|\,\middle|\,j,k\in V\right\}=\{0,\dots,m\}$$ (e.g. $V=\{0,\dots,m\}$). Then, the window $g\in\C^d$, defined by
\begin{equation*}
g_j:=\begin{cases}2^j,&\text{if } j\in V,\\0,&\text{otherwise},\end{cases}
\end{equation*}
satisfies $\supp\left(V_gg\right)=\{-m,\dots,m\}\times\Z_d$.
\end{example}

\begin{proof}
Clearly, we have $V_gg(k,\cdot)\equiv 0$ whenever $k\notin\{-m,\dots,m\}$. By Lemma \ref{lm:STFTid} \ref{ambigsym}, it remains to show that $V_gg(k,l)\neq 0$ holds for all $k\in\{0,\dots,m\}$ and all $l\in\{0,\dots,d-1\}$.
For fixed $k\in\{0,\dots,m\}$, we define $$S_k:=\left\{k\leq j\leq m\,\middle|\,g_j\overline{g_{j-k}}\not\equiv 0\right\}\neq\emptyset$$ as well as $j_k:=\max\left(S_k\right)$. For every $l\in\Z_d$, it follows
\begin{align*}
\left|V_gg(k,l)\right|	&=\left|\sum_{j\in S_k} g_j\overline{g_{j-k}}e^{\nicefrac{-2\pi ijl}{d}}\right|\\
					&\geq\left|g_{j_k}\right|\cdot\left|g_{j_k-k}\right|-\sum_{j\in S_k\setminus\left\{j_k\right\}}\left|g_j\right|\cdot\left|g_{j-k}\right|\\
					&=\frac{1}{2^k}\cdot\left(4^{j_k}-\sum_{j\in S_k\setminus\left\{j_k\right\}}4^j\right)\geq\frac{1}{2^k}\cdot\left(4^{j_k}-\sum_{j=0}^{j_k-1} 4^j\right)\\
					&=\frac{1}{2^k}\cdot\left(4^{j_k}-\frac{4^{j_k}-1}{3}\right)>0.
\end{align*}
\end{proof}

The example shows that strong two-window STFT phase retrieval is possible for windows $\left(g,\hat{g}\right)$ with $g\in\R^d$ while it is known that single-window STFT phase retrieval is impossible for real windows in even dimensions \cite[Theorem 7.1.13]{PhD-DB}.

We also note that by using windows of exponential type, we do not require our support set $V$ to permit the existence of a number $r\in\mathbb{N}$ such that every $j\in\{1,\dots,m\}$ appears exactly $r$ times as a difference of elements in $V$, which would be similar to the construction in \cite[Proposition 2.2]{BF}. This allows for a higher degree of freedom in the choice of $V$.\\

We end the section by noting that Theorem \ref{thm:strtw-suff} does not translate to the continuous setting. The proof is very similar to that of Corollary \ref{cor:primesharp}. We use the standard notation and refer readers who are not familiar with the continuous short-time Fourier transform and its properties to \cite{Groech}.

\begin{theorem}\label{thm:negcont}
Let $R>0$ and $g\in L^2(\mathbb{R})$ be such that $g\equiv 0$ a.e. outside some interval $[a,a+R]$ with $a\in\R$, or equivalently $V_gg\equiv 0$ outside $[-R,R]\times\R$.
Then, there exist $f,f_1,f_2\in L^2(\mathbb{R})$ satisfying $f_1,f_2\notin\T\cdot f$ as well as
\begin{equation*}
\left|V_g f_1\right|=\left|V_gf\right|\quad\text{and}\quad\left|V_{\hat{g}}f_2\right|=\left|V_{\hat{g}}f\right|,
\end{equation*}
i.e. $f$ is neither phase retrievable w.r.t. $g$ nor phase retrievable w.r.t. $\hat{g}$.
\end{theorem}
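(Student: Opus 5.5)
We mimic the proof of Corollary \ref{cor:primesharp}. We construct a single $f\in L^2(\R)$ with $f=h_1+h_2$, where $h_1$ and $h_2$ are nonzero and have supports far apart relative to $R$. We also want $\hat f$ to have the same two-piece structure. The standard continuous-case observation replaces Theorem \ref{thm:PRhole}. Suppose $f=u+v$ with $u,v\neq 0$, $\supp u\subseteq(-\infty,b]$ and $\supp v\subseteq[b+R,\infty)$. Then every translate $T_x\overline{g}$ has support in an interval of length $R$, so it overlaps at most one of $u,v$ up to a null set. Hence $|V_gf|=|V_gu|+|V_gv|$ pointwise, with disjoint supports in the time variable, and so $|V_g(u-v)|=|V_gf|$. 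Moreover $u-v\notin\T\cdot f$, since $u,v\neq 0$. So $f_1:=u-v$ works whenever $f$ has such a gap of length at least $R$ in its support.

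For the second window we use the continuous time-frequency duality $|V_{\hat g}\hat h(x,\xi)|=|V_gh(-\xi,x)|$. This is the continuous analogue of Lemma \ref{lm:STFTid} \ref{STFTdual}, and it follows from $\widehat{M_\xi T_x g}=T_\xi M_{-x}\hat g$ up to a phase. Thus $f$ fails to be phase retrievable w.r.t. $\hat g$ as soon as $\check f$ (equivalently $\hat f$ after reflection) fails to be phase retrievable w.r.t. $g$. By the same splitting argument, this holds if $\check f$ also has a gap of length at least $R$ separating two nonzero parts. Then $f_2:=\mathcal F(u'-v')$, where $\check f=u'+v'$ is the corresponding splitting.

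It remains to construct $f$ such that both $f$ and $\hat f$ have a gap of length $\ge R$ splitting the support into two nontrivial pieces. This is the main obstacle, because compact support of both is impossible. Gaps, however, are allowed, and we only need a \emph{vanishing interval} with mass on both sides. A natural first attempt is a function vanishing on $[-R,R]$ whose Fourier transform also vanishes on $[-R,R]$. Take $\varphi\in L^2$ even and real-valued, and take
\begin{equation*}
f=\varphi(\cdot-c)+\varphi(\cdot+c)-\psi,
\end{equation*}
where everything is built from Gaussians or Hermite-type pieces. A cleaner route is the following. The operator $P=\chi_{[-R,R]}$ has the property that $\{h:\ Ph=0,\ P\hat h=0\}$ is infinite-dimensional, since it is the kernel of the map $h\mapsto(Ph,P\hat h)$ into the range of the compact-type time-frequency limiting operators. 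Concretely, $\{h: Ph=0\}$ has infinite codimension-complement, while the constraint $P\hat h=0$ imposes a condition whose restriction to $\{Ph=0\}$ has closed range. An infinite-dimensional subspace meets the kernel nontrivially, because the prolate spheroidal analysis shows $\|P\mathcal F (1-P)\|<1$ fails only on a finite-dimensional set. I would make this precise via the Landau--Pollak fact that $\|P\mathcal F P\|<1$. This gives $L^2=\operatorname{ran}P+\mathcal F^{-1}\operatorname{ran}(1-P)\dots$, so I would instead argue directly. The subspaces $A=\{Ph=0\}$ and $B=\{P\hat h=0\}$ are both of infinite dimension and codimension, and $A^\perp\cap B^\perp=\{h=Ph,\ \hat h=P\hat h\}=\{0\}$ with $A^\perp+B^\perp$ closed (by $\|P\mathcal F^{-1}P\mathcal F\|<1$). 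Hence $A+B$ is dense, and then $A\cap B=(A^\perp+B^\perp)^\perp\neq\{0\}$ because $A^\perp+B^\perp\neq L^2$ (it misses the functions outside both bands). Choosing $0\neq h\in A\cap B$, the function $h$ vanishes on $[-R,R]$. It cannot vanish entirely on one side, since otherwise $h$ would be supported on a half-line beyond $R$ and have no other constraint. If it did, I would modulate or translate a second such element to fix this, or symmetrize via $h(x)+h(-x)$, choosing either this or the odd part, whichever is nonzero; this keeps both constraints. The same applies on the Fourier side, so setting $f:=h$ and applying the splitting argument in both domains finishes the proof.
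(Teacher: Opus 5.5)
Your overall architecture matches the paper's proof: the splitting observation (which the paper calls ``well known''), the duality $|V_{\hat g}\hat h(x,\xi)|=|V_gh(-\xi,x)|$, and a nonzero $h$ such that both $h$ and $\hat h$ vanish on an interval. The difference is that the paper simply cites \cite{AB-tfgaps} for this last ingredient, whereas you try to prove it. That is where the gap lies.

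With $A=\{h:\ h=0\text{ on }[-R,R]\}$ and $B=\{h:\ \hat h=0\text{ on }[-R,R]\}$, several of your claims are correct:
\begin{itemize}
\item $A^\perp\cap B^\perp=\{0\}$;
\item $A^\perp+B^\perp$ is closed, because $\|P\mathcal{F}^{-1}P\|<1$;
\item $A\cap B=(A^\perp+B^\perp)^\perp$.
\end{itemize}
These facts only say that $A^\perp+B^\perp$ is a closed direct sum. If it were all of $L^2(\R)$, you would get $A\cap B=\{0\}$. Your reason that it is not (``it misses the functions outside both bands'') is circular, since such functions are exactly the nonzero elements of $A\cap B$ you are trying to produce. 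The missing step is easy to supply. Suppose $w=w_1+w_2$ with $\supp w_1\subseteq[-R,R]$ and $\supp\hat w_2\subseteq[-R,R]$. Then $w=w_2$ on $(R,\infty)$, which is the restriction of an entire function. Hence, for example, $\chi_{[R+1,R+2]}\notin A^\perp+B^\perp$, and closedness of the sum then gives $A\cap B\neq\{0\}$.

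The paragraph preceding your ``direct'' argument contains false statements and should be deleted. For instance, an infinite-dimensional subspace need not meet a given kernel nontrivially, and ``$\|P\mathcal F(1-P)\|<1$ fails only on a finite-dimensional set'' has no meaning.

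Your reason why $h$ cannot vanish on an entire side of $[-R,R]$ is also wrong. There \emph{is} another constraint, namely $\hat h=0$ on $[-R,R]$. The real reason is that $\hat h$ would be the boundary value of a Hardy-space function on a half-plane, and such a function cannot vanish on an interval unless $h=0$.

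Your fallback, however, is correct, and you should use it from the start. The reflection $h\mapsto h(-\cdot)$ preserves $A\cap B$, so a nonzero even or odd element exists. Such a function vanishes on $[-R,R]$ and necessarily carries mass on both sides. The same holds for its Fourier transform, which has the same parity. This makes explicit the two-sidedness that the paper leaves implicit. With the fix above, your proof is self-contained rather than relying on the citation.
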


\begin{proof}
By \cite{AB-tfgaps}, there exists $0\not\equiv h\in L^2(\R)$ satisfying $h,\hat{h}\equiv 0$ a.e. on $[0,R]$. (In fact, the subspace of functions with this property is infinite-dimensional.) It is well-known (cf. e.g. \cite{PRinj}) that neither $h$ nor $\hat{h}$ are phase retrievable w.r.t. $g$ under the given assumptions.
Letting $f:=\hat{h}$ and using the fact that $\left|V_{\hat{g}}f(x,\omega)\right|=\left|V_gh(-\omega,x)\right|$, it follows that $f$ is neither phase retrievable w.r.t. $g$ nor phase retrievable w.r.t. $\hat{g}$.
\end{proof}

Theorem \ref{thm:negcont} only states that \textit{strong} two-window phase retrieval is impossible for the considered window class. We do not know whether there exist windows $g\in L^2(\R)$ whose ambiguity function vanishes outside a vertical strip $[-R,R]\times\R$ such that every signal $f\in L^2(\R)$ can be recovered (up to global phase) from the \textit{combined} measurements $\left|V_gf\right|$ and $\left|V_{\hat{g}}f\right|$. The same applies to Corollary \ref{cor:primesharp}.

\section{Single-window STFT phase retrieval}\label{sec:ow}

We will now return our attention to the single-window phase retrieval problem.
In order to do so, we recall that the STFT phase retrieval problem is essentially a sampling problem in the time-frequency plane, in the sense that we need to recover $f$ from samples of its quadratic time-frequency representation $V_ff$.
Since the same applies to the two-window problem, Theorem \ref{thm:strtw-suff} leads to the following sampling result. As mentioned before, a weak version of the theorem would suffice for the proof.

\begin{corollary}\label{cor:sampl-hole}
Let $m>m^{\ast}$, where $m^{\ast}$ is as in Theorem \ref{thm:strtw-suff}, and define
\begin{equation*}
\Omega\coloneqq\left(\{-m,\dots,m\}\times\Z_d\right)\cup\left(\Z_d\times\{-m,\dots,m\}\right).
\end{equation*}
If $f,\tilde{f}\in\C^d$ satisfy $V_{\tilde{f}}\tilde{f}\vert_{\Omega}=V_ff\vert_{\Omega}$, there exists $\gamma\in\T$ satisfying $\tilde{f}=\gamma f$.
\end{corollary}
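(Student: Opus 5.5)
The plan is to reduce the sampling statement to Theorem \ref{thm:strtw-suff} by choosing a window whose ambiguity function has exactly the right support. Since $m>m^{\ast}\geq\frac{d-1}{4}$, I may assume $m<\frac{d-1}{2}$; otherwise $\Omega=\Z_d^2$ and $V_ff$ is known everywhere. In that case the claim follows from Lemma \ref{lm:STFTid} \ref{ambigrel} applied with any window whose ambiguity function vanishes nowhere, for instance the window of Example \ref{ex:exgtw} with $m=\lceil\frac{d-1}{2}\rceil$. Alternatively, one recovers $f$ from $V_ff$ directly: $V_ff(k,\cdot)$ is the Fourier transform of $j\mapsto f_j\overline{f_{j-k}}$, so $V_ff$ determines all products $f_j\overline{f_{j'}}$, i.e. the rank-one matrix $ff^\ast$.

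So let $m^{\ast}<m<\frac{d-1}{2}$ (so $d\geq 6$). By Example \ref{ex:exgtw}, there is a window $g$ with $\supp(V_gg)=\{-m,\dots,m\}\times\Z_d$. Next I compute $\supp(V_{\hat g}\hat g)$. By Lemma \ref{lm:STFTid} \ref{STFTdual} with $f=g$, we have $|V_{\hat g}\hat g(k,l)|=d\,|V_gg(-l,k)|$. This is nonzero if and only if $-l\in\{-m,\dots,m\}$, so $\supp(V_{\hat g}\hat g)=\Z_d\times\{-m,\dots,m\}$. Hence
\begin{equation*}
\supp(V_gg)\cup\supp(V_{\hat g}\hat g)=\Omega .
\end{equation*}

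Now suppose $V_{\tilde f}\tilde f\vert_\Omega=V_ff\vert_\Omega$. Then $V_{\tilde f}\tilde f\cdot V_gg=V_ff\cdot V_gg$ on all of $\Z_d^2$, since both sides vanish off $\supp(V_gg)\subseteq\Omega$. By Lemma \ref{lm:STFTid} \ref{ambigrel}, the two-dimensional Fourier transforms of $|V_g\tilde f|^2$ and $|V_gf|^2$ therefore coincide. By injectivity of the Fourier transform, $|V_g\tilde f|=|V_gf|$. The same argument with $\hat g$ in place of $g$ gives $|V_{\hat g}\tilde f|=|V_{\hat g}f|$.

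By Theorem \ref{thm:strtw-suff}, the pair $(g,\hat g)$ does strong two-window phase retrieval, so $f$ is phase retrievable with respect to $g$ or with respect to $\hat g$. In either case the corresponding equality of measurements yields $\gamma\in\T$ with $\tilde f=\gamma f$.

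The only step needing care is the support computation for $V_{\hat g}\hat g$ via the duality identity, in particular the sign and index swap $(k,l)\mapsto(-l,k)$. This is harmless because $\{-m,\dots,m\}$ is symmetric. The degenerate case $m\geq\frac{d-1}{2}$ is also handled separately, as above.
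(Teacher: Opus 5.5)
Your proof is correct and follows the paper's argument: you take the window of Example \ref{ex:exgtw}, use duality to get $\supp(V_{\hat g}\hat g)=\Z_d\times\{-m,\dots,m\}$, recover $|V_gf|$ and $|V_{\hat g}f|$ from $V_ff\vert_\Omega$ via Lemma \ref{lm:STFTid} \ref{ambigrel}, and conclude with Theorem \ref{thm:strtw-suff}; splitting cases on $m$ rather than on $d$ is, if anything, slightly cleaner. One small slip: for even $d$, the window of Example \ref{ex:exgtw} with $m=\lceil\frac{d-1}{2}\rceil=\frac d2$ does not have a nowhere-vanishing ambiguity function, since wrap-around gives $V_gg(\frac d2,l)=2^{d/2}(1+(-1)^l)$, which vanishes for odd $l$. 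Your alternative argument, recovering $ff^\ast$ from $V_ff$, handles the degenerate case correctly on its own.
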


\begin{proof}
We need to show that every $f$ can be reconstructed up to global phase from $V_ff\vert_{\Omega}$.\\
For $d<6$, this is clear since we have $\Omega=\Z_d^2$ in this case.\\
For $d\geq 6$, we choose $g\in\C^d$ satisfying $\supp\left(V_gg\right)=\{-m,\dots,m\}\times\Z_d$. The existence of $g$ is shown in Example \ref{ex:exgtw} and the pair $\left(g,\hat{g}\right)$ does (strong) two-window phase retrieval by Theorem \ref{thm:strtw-suff}.
By Lemma \ref{lm:STFTid} \ref{STFTdual}, we have
\begin{equation*}
\supp\left(V_{\hat{g}}\hat{g}\right)=\Z^d\times\{-m,\dots,m\}
\end{equation*}
and consequently, both $\supp\left(V_gg\right)$ and $\supp\left(V_{\hat{g}}\hat{g}\right)$ are subsets of $\Omega$.
Given $V_ff\vert_{\Omega}$, we can therefore reconstruct $V_ff\cdot V_gg$ as well as $V_ff\cdot V_{\hat{g}}\hat{g}$ and thus both $\left|V_gf\right|$ and $\left|V_{\hat{g}}f\right|$.
Since the pair $\left(g,\hat{g}\right)$ does (strong) two-window phase retrieval, this allows us to reconstruct $f$ up to global phase.
\end{proof}

The usual relation between STFT phase retrieval and sampling in the time-frequency plane immediately yields the following reformulation.

\begin{corollary}\label{cor:PR-hole}
Let $m>m^{\ast}$, where $m^{\ast}$ is as in Theorem \ref{thm:strtw-suff}, and $g\in\C^d$ be such that
\begin{equation*}
\left(\{-m,\dots,m\}\times\Z_d\right)\cup\left(\Z_d\times\{-m,\dots,m\}\right)\subseteq\supp\left(V_gg\right).
\end{equation*}
Then, $g$ does STFT phase retrieval.
\end{corollary}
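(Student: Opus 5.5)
The plan is to reduce the statement to Corollary \ref{cor:sampl-hole} through the ambiguity function relation Lemma \ref{lm:STFTid} \ref{ambigrel}. Write $\Omega\coloneqq\left(\{-m,\dots,m\}\times\Z_d\right)\cup\left(\Z_d\times\{-m,\dots,m\}\right)$, so that by assumption $\Omega\subseteq\supp\left(V_gg\right)$. Let $f,\tilde{f}\in\C^d$ with $\left|V_g\tilde{f}\right|=\left|V_gf\right|$. We will show that this forces $V_{\tilde{f}}\tilde{f}\vert_{\Omega}=V_ff\vert_{\Omega}$, and then Corollary \ref{cor:sampl-hole} yields $\gamma\in\T$ with $\tilde{f}=\gamma f$.

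First, since the measurements agree, their squared moduli agree, and so do the two-variable Fourier transforms of these squared moduli. By Lemma \ref{lm:STFTid} \ref{ambigrel}, this gives for all $k,l\in\Z_d$
\begin{equation*}
d\cdot V_{\tilde{f}}\tilde{f}(k,l)\cdot V_gg(k,l)=\mathcal{F}\left|V_g\tilde{f}\right|^2(l,-k)=\mathcal{F}\left|V_gf\right|^2(l,-k)=d\cdot V_ff(k,l)\cdot V_gg(k,l).
\end{equation*}
Second, for $(k,l)\in\Omega\subseteq\supp\left(V_gg\right)$ we have $V_gg(k,l)\neq 0$. Dividing by $d\cdot V_gg(k,l)$ therefore gives $V_{\tilde{f}}\tilde{f}(k,l)=V_ff(k,l)$ for all $(k,l)\in\Omega$. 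This is exactly the hypothesis of Corollary \ref{cor:sampl-hole}, which finishes the argument.

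There is no real obstacle here. The only point needing care is the index bookkeeping in Lemma \ref{lm:STFTid} \ref{ambigrel}: the Fourier transform is evaluated at $(l,-k)$, but the product on the right-hand side is evaluated at $(k,l)$. So the support condition is needed at $(k,l)$ itself, which is exactly what the hypothesis $\Omega\subseteq\supp\left(V_gg\right)$ provides. The case $d<6$ needs no separate treatment, since it is already handled inside Corollary \ref{cor:sampl-hole}.
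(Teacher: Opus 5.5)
Your proposal is correct and follows the paper's own route: the paper derives this corollary from Corollary \ref{cor:sampl-hole} via the ``usual relation'' that, by Lemma \ref{lm:STFTid} \ref{ambigrel}, equal measurements $\left|V_g\tilde{f}\right|=\left|V_gf\right|$ force $V_{\tilde{f}}\tilde{f}=V_ff$ on $\supp\left(V_gg\right)\supseteq\Omega$. You have written out this one-line reduction in full, including the division by $V_gg(k,l)\neq 0$, which the paper leaves implicit.
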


For $d\geq 6$, we can choose $m^{\ast}<m<\frac{d-1}{2}$, in which case $$\left(\{-m,\dots,m\}\times\Z_d\right)\cup\left(\Z_d\times\{-m,\dots,m\}\right)$$ is a proper subset of $\Z_d^2$. We can then rewrite
\begin{equation*}
\left(\{-m,\dots,m\}\times\Z_d\right)\cup\left(\Z_d\times\{-m,\dots,m\}\right)=\Z_d^2\setminus\{m+1,\dots,d-(m+1)\}^2.
\end{equation*}
Since we can always choose $m$ to be smaller than or equaler to $\frac{d+2}{3}$ (smaller than or equal to $\frac{d+3}{4}$ when $d$ is prime), Corollary \ref{cor:PR-hole} shows that a window is allowed to have its ambiguity function vanish on a set $M_d$ of cardinality
\begin{equation*}
\left|M_d\right|\geq\left(d-2\cdot\frac{d+2}{3}-1\right)^2=\left(\frac{d-7}{3}\right)^2\simeq \frac{d^2}{9},
\end{equation*}
i.e. a ninth of the time-frequency plane, in the general case and of cardinality
\begin{equation*}
\left|M_d\right|\geq\left(d-2\cdot\frac{d+3}{4}-1\right)^2=\left(\frac{d-5}{2}\right)^2\simeq \frac{d^2}{4},
\end{equation*}
i.e. a quarter of the time-frequency plane, when $d$ is prime, without losing the ability to do phase retrieval. This clearly generalizes the existing result that $V_gg$ is allowed to vanish at $\left(\frac{d}{2},\frac{d}{2}\right)$ in the case where $d\geq 4$ is even \cite[Example 7.1.7]{PhD-DB}.
In light of the cyclic structure, we recognize that the ``hole'' in the time-frequency plane is located in the area of (in terms of modulus) large time and frequency parameters. Therefore, Corollary \ref{cor:sampl-hole} means that every signal can be reconstructed from its ambiguity function by using only the information where either the time or the frequency index are small.\\
If one is purely interested in sampling from $V_ff$ (without the constraint to represent the sampling set as the support of a window's ambiguity function), one can ignore all samples $V_ff(k,l)$ for $\frac{d}{2}<k\leq d-1$ in Corollary \ref{cor:sampl-hole} due to the symmetry of $V_ff$ and the symmetry of $\Omega$, showing that we can choose a sampling set of cardinality $\simeq\frac{4}{9}d^2$ in the general case and of cardinality $\simeq\frac{3}{8}d^2$ when $d$ is prime.

It is important to note that the sharpness of the bound on $m$ in the prime case for the strong two-window problem (Corollary \ref{cor:primesharp}) does not necessarily carry over to the results of this section.\\

While Corollary \ref{cor:sampl-hole} is interesting in its own right, Corollary \ref{cor:PR-hole} relies on the existence of a suitable window in order to extend the previously known results for phase retrieval.
The remainder of this section will be devoted to the construction of such windows.

For odd dimensions $d=2p+1$, we suggest the ansatz of a window $g\in\C^d$ satisfying\linebreak $\supp(g)\subseteq\{0,\dots,p+n\}$ for some $n<p$ in order to simplify calculations. In this case, we have
\begin{equation*}
V_gg(p,l)=\sum_{j=0}^{n-1}g_j\overline{g_{j+p+1}}z_l^j+\sum_{j=p}^{p+n}g_j\overline{g_{j-p}}z_l^j
\end{equation*}
for all $l\in\Z_d$. Our first goal is to ensure that $V_gg(p,l)$ vanishes for $l\in\{p+1-r,\dots,p+r\}$ for some $r\in\N$. This can only be achieved when the polynomial
\begin{equation*}
q_1(z):=\prod_{j=1}^r\left(z-z_{p+j}\right)\left(z-z_{p+1-j}\right)
\end{equation*}
is a divisor of
\begin{equation*}
q(z):=\sum_{j=0}^{n-1}g_j\overline{g_{j+p+1}}z^j+\sum_{j=p}^{p+n}g_j\overline{g_{j-p}}z^j.
\end{equation*}
Consequently, we are looking for a polynomial $q_2$ of degree $p+n-2r$ such that the coefficients $a_n,\dots,a_{p-1}$ in
\begin{equation*}
\sum_{j=0}^{p+n} a_jz^j:=q_1(z)q_2(z)
\end{equation*}
vanish. Assuming w.l.o.g. that $q_2$ is normalized, this leads to a system of linear equations with $p+n-2r$ variables, consisting of $p-n$ equations. Aiming for a quadratic system, we suggest to take $n=r$.

We can now hope for the system to be regular. Solving it gives us the coefficients $a_j$ above and we can choose the coefficients $g_0,\dots,g_r$ as well as $g_p,\dots,g_{p+r}$ accordingly. The remaining $p-r-1$ entries of $g$ then have to be chosen in such a way that $V_gg(k,l)=0$ holds exactly for $k\in\{p+1-r,\dots,p-1\}$ and $l\in\{p+1-r,\dots,p+r\}$.
Working from $k=p-1$ to $k=p+1-r$, two more entries of $g$ will appear in $V_gg(k,\cdot)$ with each step. In order to leave at least one coefficient to ensure that $V_gg(k,\cdot)$ has no zeros for $0\leq k\leq p-r$ therefore requires $2(r-1)<p-r-1$, i.e. $r<\frac{p+1}{3}$.
In order to construct a window that exploits Corollary \ref{cor:PR-hole} to its full extent, we would like to choose $r=\frac{d-(2m+1)}{2}$, where $m$ is the smallest integer greater than $m^{\ast}$ and $m^{\ast}$ is as in Theorem \ref{thm:strtw-suff}.
This amounts to $r\approx\frac{p}{3}$ in the general case and to $r\approx\frac{p}{2}$ in the prime case, suggesting that our ansatz might be suitable to construct windows that exhaust the possibilities of Corollary \ref{cor:PR-hole} in the general case but not in the prime case.\\

Of course, the approach outlined above is still very vague and we do not know whether it can be carried out uniformly. Moreover, the approach does not work for even dimensions: Starting with $V_gg\left(\frac{d}{2},\cdot\right)$ and prescribing its zeros in the same way as above inevitably leads to a real-valued system of linear equations, thus leading to real-valued coefficients in $V_gg\left(\frac{d}{2},\cdot\right)$. It is already known that phase retrieval is not possible in this case, cf. \cite[Theorem 7.1.13]{PhD-DB}.\\

However, for odd dimensions and specific choices of $r$, we are able to apply the approach in order to construct suitable windows.

\begin{examples}\label{ex:constr-windowhole}
Let $p\in\N$ as well as $d=2p+1$.
\begin{enumerate}[label=\alph*)]
\item\label{ex:hole-2x2} If $p\geq 3$, the window $g\in\R^d\subseteq\C^d$, defined by
\begin{equation*}
g_j:=\begin{cases} -\frac{1}{2\cos\left(\frac{2p^2\pi}{d}\right)},&\text{if } j=1,\\
				c,&\text{if } j=p-1,\\
				-2\cos\left(\frac{2p^2\pi}{d}\right),&\text{if } j=p+1,\\
				0,&\text{if } j\in\{p+2,\dots,d-1\},\\
				1,&\text{otherwise},\end{cases}
\end{equation*}
for all $j\in\Z_d$, satisfies $\supp\left(V_gg\right)=\Z_d^2\setminus\{p,p+1\}^2$ for all but finitely many choices of $c\in\mathbb{R}$.
\item\label{ex:hole-4x4} If $p\geq 6$, the window $g\in\R^d\subseteq\C^d$, defined by
\begin{equation*}
g_j:=\begin{cases}-\frac{4\cos\left(\frac{\pi}{d}\right)\cos\left(\frac{p\pi}{d}\right)}{2\cos\left(\frac{\pi}{d}\right)-1},&\text{if } j=0,\\
				2\cos\left(\frac{\pi}{d}\right)\cdot(-1)^{p+1},&\text{if } j=1,\\
				-\frac{1}{2\cos\left(\frac{p\pi}{d}\right)},&\text{if } j=2,\\
				y_p,&\text{if }j=3,\\
				0,&\text{if } j\in\{p-6,\dots,p-3\}\setminus\{0,\dots,3\},\\
				c,&\text{if } j=p-2,\\
				\frac{2\cos\left(\frac{\pi}{d}\right)-1}{2\cos\left(\frac{\pi}{d}\right)}\cdot y_p,&\text{if } j=p-1,\\
				-\frac{2\cos\left(\frac{\pi}{d}\right)-1}{4\cos\left(\frac{\pi}{d}\right)\cos\left(\frac{p\pi}{d}\right)},&\text{if } j=p,\\
				\left(2\cos\left(\frac{\pi}{d}\right)-1\right)\cdot(-1)^{p+1},&\text{if } j=p+1,\\
				-2\cos\left(\frac{p\pi}{d}\right),&\text{if } j=p+2,\\
				0,&\text{if } j\in\{p+3,\dots,d-1\},\\
				1,&\text{otherwise},\end{cases}
\end{equation*}
for all $j\in\Z_d$, where
\begin{equation*}
y_p:=\left(2\cos\left(\frac{\pi}{d}\right)-1\right)\cdot(-1)^{p+1}\cdot\frac{\cos\left(\frac{p\pi}{d}\right)+\cos\left(\frac{(p-1)\pi}{d}\right)}{\cos\left(\frac{(p-1)\pi}{d}\right)+\cos\left(\frac{(p-4)\pi}{d}\right)}\cdot\frac{1}{4\cos^2\left(\frac{p\pi}{d}\right)},
\end{equation*}
satisfies $\supp\left(V_gg\right)=\Z_d^2\setminus\{p-1,p,p+1,p+2\}^2$ for all but finitely many choices of $c\in\mathbb{R}$.
\end{enumerate}
\end{examples}

\begin{proof}\mbox{}
\begin{enumerate}[label=\alph*)]
\item The fact that $d$ is odd implies $\cos\left(\frac{2p^2\pi}{d}\right)\neq 0$ and thus, $g$ is well-defined. By Lemma \ref{lm:STFTid} \ref{ambigsym}, it suffices to compute $V_gg(k,l)$ for $0\leq k\leq p$.

For $k=p$ and every $l\in\Z_d$, we compute
\begin{align*}
V_gg(p,l)&=g_0g_{p+1}+g_pg_0z_l^p+g_{p+1}g_1z_l^{p+1}=z_l^{p+1}+z_l^p-2\cos\left(\frac{2p^2\pi}{d}\right)\\
		&=2\operatorname{Re}\left(z_l^p\right)-2\cos\left(\frac{2p^2\pi}{d}\right)=2\cos\left(\frac{2lp\pi}{d}\right)-2\cos\left(\frac{2p^2\pi}{d}\right).
\end{align*}
This implies that $V_gg(p,l)=0$ holds if and only if $lp\equiv p^2~\operatorname{mod} d$ or $lp\equiv -p^2~\operatorname{mod} d$ and since $\operatorname{gcd}(p,d)=1$, this is equivalent to $l\in\{\pm p\}=\{p,p+1\}$.

For $0\leq k\leq p-1$, we note that
\begin{equation*}
V_gg(k,l)=\sum_{j=k}^{p+1}g_jg_{j-k}z_l^j
\end{equation*}
for all $l\in\Z_d$. Consequently, we have $V_gg(k,l)=a_{kl}c^{1+\delta_{k,0}}+b_{kl}$, where $a_{kl},b_{kl}\in\C$ and $\delta_{k,0}$ is the Kronecker delta.
For each pair $(k,l)$, this can only be zero for finitely many $c\in\R$, as long as we can show that $a_{kl}\neq 0$.
This is clear for $k=0$, where $a_{kl}=z_l^{p-1}$ for all $l\in\Z_d$, as well as for $3\leq k\leq p-1$, where $a_{kl}=g_{p-1-k}z_l^{p-1}$ for all $l\in\Z_d$.
For $k\in\{1,2\}$, we have $a_{kl}=z_l^{p-1}\cdot\left(g_{p-1-k}+g_{p-1+k}z_l^k\right)$ and it can easily be verified that $g_{p-1-k}\neq -g_{p-1+k}$, implying $a_{kl}\neq 0$ since $d$ is odd.


\item The fact that $d\geq 6$ (as well as $\frac{p}{d}<\frac{1}{2}$) implies that all denominators are nonzero and therefore, $g$ is well-defined. Again, we only need to consider $V_gg(k,l)$ for $0\leq k\leq p$.

For $k=p$ and every $l\in\Z_d$, we compute
\begin{align*}
V_gg(p,l)&=g_0g_{p+1}+g_1g_{p+2}z_l+g_pg_0z_l^p+g_{p+1}g_1z_l^{p+1}+g_{p+2}g_2z_l^{p+2}\\
		&=(-1)^p\cdot 4\cos\left(\frac{\pi}{d}\right)\cos\left(\frac{p\pi}{d}\right)\left(z_l+1\right)\\
		&\qquad+z_l^p+2\cos\left(\frac{\pi}{d}\right)\left(2\cos\left(\frac{\pi}{d}\right)-1\right)z_l^{p+1}+z_l^{p+2}.
\end{align*}
Multiplying the right-hand side by $z_l^p$ shows that $V_gg(p,l)=0$ is equivalent to
\begin{align*}
2\cos\left(\frac{\pi}{d}\right)\left(2\cos\left(\frac{\pi}{d}\right)-1\right)+z_l+\overline{z_l}&=(-1)^{p+1}\cdot 4\cos\left(\frac{\pi}{d}\right)\cos\left(\frac{p\pi}{d}\right)\left(z_l^p+\overline{z_l^p}\right)\\
&\Leftrightarrow\\
\cos\left(\frac{\pi}{d}\right)\left(2\cos\left(\frac{\pi}{d}\right)-1\right)+\cos\left(\frac{2l\pi}{d}\right)&=(-1)^{p+1}\cdot 4\cos\left(\frac{\pi}{d}\right)\cos\left(\frac{p\pi}{d}\right)\cos\left(\frac{2pl\pi}{d}\right).
\end{align*}
In particular, we have $V_gg(p,-l)=0$ if and only if $V_gg(p,l)=0$.
Furthermore, since $\cos\left(\frac{2l\pi}{d}\right)=2\cos^2\left(\frac{\pi l}{d}\right)-1$ and $\cos\left(\frac{2pl\pi}{d}\right)=(-1)^l\cos\left(\frac{l\pi}{d}\right)$, there exist polynomials $q_1,q_2$ of degree $2$ such that $V_gg(p,l)=0$ is equivalent to $q_1\left(\cos\left(\frac{l\pi}{d}\right)\right)=0$ for all even $l$ and to $q_2\left(\cos\left(\frac{l\pi}{d}\right)\right)=0$ for all odd $l$.
As the elements $\cos\left(\frac{\pi l}{d}\right)$ are distinct for $l\in\{0,\dots,d-1\}$, this implies that $V_gg(p,l)=0$ can hold for at most four $l\in\{0,\dots,d-1\}$.
Using the properties of the cosine function, it can easily be verified that $V_gg(p,p-1)=V_gg(p,p)=0$ and by the symmetry established above, it follows $\supp\left(V_gg(p,\cdot)\right)=\Z_d\setminus\{p-1,p,p+1,p+2\}$ as desired.

For $k=p-1$ and every $l\in\Z_d$, we compute
\begin{align*}
V_gg(p-1,l)&=g_0g_{p+2}+g_{p-1}g_0z_l^{p-1}+g_pg_1z_l^p+g_{p+1}g_2z_l^{p+1}+g_{p+2}g_3z_l^{p+2}\\
		&=\frac{8\cos\left(\frac{\pi}{d}\right)\cos\left(\frac{p\pi}{d}\right)^2}{2\cos\left(\frac{\pi}{d}\right)-1}+(-1)^p\frac{2\cos\left(\frac{\pi}{d}\right)-1}{2\cos\left(\frac{p\pi}{d}\right)}\left(z_l^p+\overline{z_l^p}\right)\\
		&\quad~+(-1)^p\frac{2\cos\left(\frac{\pi}{d}\right)-1}{2\cos\left(\frac{p\pi}{d}\right)}\cdot\frac{\cos\left(\frac{p\pi}{d}\right)+\cos\left(\frac{(p-1)\pi}{d}\right)}{\cos\left(\frac{(p-1)\pi}{d}\right)+\cos\left(\frac{(p-4)\pi}{d}\right)}\left(z_l^{p-1}+\overline{z_l^{p-1}}\right)\\
		&=\frac{8\cos\left(\frac{\pi}{d}\right)\cos\left(\frac{p\pi}{d}\right)^2}{2\cos\left(\frac{\pi}{d}\right)-1}+(-1)^p\frac{2\cos\left(\frac{\pi}{d}\right)-1}{\cos\left(\frac{p\pi}{d}\right)}\cos\left(\frac{2pl\pi}{d}\right)\\
		&\quad~+(-1)^p\frac{2\cos\left(\frac{\pi}{d}\right)-1}{\cos\left(\frac{p\pi}{d}\right)}\cdot\frac{\cos\left(\frac{p\pi}{d}\right)+\cos\left(\frac{(p-1)\pi}{d}\right)}{\cos\left(\frac{(p-1)\pi}{d}\right)+\cos\left(\frac{(p-4)\pi}{d}\right)}\cos\left(\frac{2(p-1)l\pi}{d}\right),
\end{align*}
which implies that $V_gg(p-1,-l)=0$ holds if and only if $V_gg(p-1,l)=0$. Multiplying the right-hand side by
\begin{equation*}
(-1)^{p+l}\cdot\left(2\cos\left(\frac{\pi}{d}\right)-1\right)\cos\left(\frac{p\pi}{d}\right)\left(\cos\left(\frac{(p-1)\pi}{d}\right)+\cos\left(\frac{(p-4)\pi}{d}\right)\right)
\end{equation*}
shows that $V_gg(p-1,l)=0$ is equivalent to
\begin{align}
\begin{split}\label{eq:zeros-p-1}
&(-1)^{p+l}\cdot 8\cos\left(\frac{\pi}{d}\right)\cos\left(\frac{p\pi}{d}\right)^3\left(\cos\left(\frac{(p-1)\pi}{d}\right)+\cos\left(\frac{(p-4)\pi}{d}\right)\right)\\
={}&-\left(2\cos\left(\frac{\pi}{d}\right)-1\right)^2\cdot\Bigg(\left(\cos\left(\frac{p\pi}{d}\right)+\cos\left(\frac{(p-1)\pi}{d}\right)\right)\cos\left(\frac{3l\pi}{d}\right)\\
&\hphantom{\left(2\cos\left(\frac{\pi}{d}\right)-1\right)^2\cdot}\qquad +\left(\cos\left(\frac{(p-1)\pi}{d}\right)+\cos\left(\frac{(p-4)\pi}{d}\right)\right)\cos\left(\frac{l\pi}{d}\right)\Bigg).
\end{split}
\end{align}
Using the properties of the cosine function as well as $3p=d+p-1$ and\linebreak $3(p-1)=d+p-4$, this can easily be verified for $l\in\{p-1,p\}$, implying $V_gg(p-1,l)=0$ for all $l\in\{p-1,p,p+1,p+2\}$ by the symmetry established above.\\
It remains to show that $p-1$, $p$, $p+1$ and $p+2$ are the only zeros of $V_gg(p-1,\cdot)$. In order to do so, we recall the identity $\cos(3x)=4\cos^3(x)-3\cos(x)$.
Together with \eqref{eq:zeros-p-1}, this implies that there exists a polynomial $q$ of degree $3$ such that $V_gg(p-1,l)$ is equivalent to $q\left(\cos\left(\frac{\pi l}{d}\right)\right)=(-1)^{p+l}$.
In particular, we have
\begin{equation*}
q\left(\cos\left(\frac{(p-1)\pi}{d}\right)\right)=q\left(\cos\left(\frac{(p+1)\pi}{d}\right)\right)=-1
\end{equation*}
as well as
\begin{equation*}
q\left(\cos\left(\frac{p\pi}{d}\right)\right)=q\left(\cos\left(\frac{(p+2)\pi}{d}\right)\right)=1.
\end{equation*}
Since $q$ is continuous and $l\mapsto\cos\left(\frac{l\pi}{d}\right)$ is strictly decreasing on $[0,d-1]$, $q$ has a zero in each of the intervals
\begin{gather*}
\left(\cos\left(\frac{(p+2)\pi}{d}\right),\cos\left(\frac{(p+1)\pi}{d}\right)\right), \left(\cos\left(\frac{(p+1)\pi}{d}\right),\cos\left(\frac{p\pi}{d}\right)\right)\\\text{and}~\left(\cos\left(\frac{p\pi}{d}\right),\cos\left(\frac{(p-1)\pi}{d}\right)\right).
\end{gather*}
Since $q$ is a polynomial of degree $3$, it follows that $q$ is strictly decreasing on both $\left(-\infty,\cos\left(\frac{(p+2)\pi}{d}\right)\right)$ and $\left(\cos\left(\frac{(p-1)\pi}{d}\right),\infty\right)$, which implies $V_gg(p-1,l)\neq 0$ for all\linebreak $l\in\Z_d\setminus\{p-1,p,p+1,p+2\}$.

For $0\leq k\leq p-2$, we note that
\begin{equation*}
V_gg(k,l)=\sum_{j=k}^{p+2} g_jg_{j-k}z_l^j
\end{equation*}
for all $l\in\Z_d$. Consequently, we have $V_gg(k,l)=a_{kl}c^{1+\delta_{k,0}}+b_{kl}$, where $a_{kl},b_{kl}\in\C$ and $\delta_{k,0}$ is the Kronecker delta.

For each pair $(k,l)$, this can only be zero for finitely many $c\in\R$, as long as we can show that $a_{kl}\neq 0$.
This is clear for $k=0$, where $a_{kl}=z_l^{p-2}$ for all $l\in\Z_d$, as well as for $5\leq k\leq p-2$, where $a_{kl}=g_{p-2-k}z_l^{p-2}$ for all $l\in\Z_d$.
For $k\in\{1,2,3,4\}$, we have $a_{kl}=z_l^{p-2}\cdot\left(g_{p-2-k}+g_{p-2+k}z_l^k\right)$ and it can easily be verified that $g_{p-2-k}\neq -g_{p-2+k}$, implying $a_{kl}\neq 0$ since $d$ is odd.
\end{enumerate}
\vspace{-\baselineskip}
\end{proof}

For odd dimensions $d\geq 13$, Example \ref{ex:constr-windowhole} \ref{ex:hole-4x4}, combined with Corollary \ref{cor:PR-hole}, improves the possible number of zeros in the ambiguity function of windows doing phase retrieval from $2$ (as previously known) to $16$.
In light of the ansatz presented above, it appears likely that the number of possible zeros can be increased further in order to scale with $d^2$, at least for odd dimensions.
In fact, for dimensions $d\in\{7,9,13,15\}$, the examples already have the maximum number of zeros allowed in Corollary \ref{cor:PR-hole}.\\
Since windows with a single zero of the ambiguity function at $\left(\frac{d}{2},\frac{d}{2}\right)$ are already known for even dimensions, it is also reasonable to expect that this ``hole'' can be enlarged to have a scaling size for even dimensions as well.

\section{Conclusion and outlook}

With Theorem \ref{thm:strtw-suff}, we have shown that (strong) two-window phase retrieval is possible for a window concentrated in time ($g$) together with a window concentrated in frequency ($\hat{g}$), given that the respective supports are still large enough.
This is due to the uncertainty principle ensuring that a signal $f$ cannot have a large time gap and a large frequency gap at the same time.\\

When translating the result into the setting of reconstructing a signal from samples of its ambiguity function, this leads to the fact that recovery can be achieved by using a (symmetric) sampling set of cardinality $\simeq \frac{8}{9}d^2$ in the general case and of cardinality $\simeq \frac{3}{4}d^2$ in the prime case (Corollary \ref{cor:sampl-hole}).
Consequently, single-window phase retrieval is also possible when the support of the window's ambiguity function is a suitable set of the respective cardinality (Corollary \ref{cor:PR-hole}).
As mentioned before, it is still an open problem to construct windows with a suitable support for higher dimensions. Moreover, it would be interesting to provide a (sharp) lower bound for the necessary number of samples, i.e. the support size of the window's ambiguity function.\\

Another important aspect in phase retrieval is stability. While it is clear that a smaller ambiguity support of the window will negatively impact the stability of the phase retrieval problem, it would be interesting to quantify this phenomenon.
On the other hand, using the two-window approach with windows $g$ and $\hat{g}$ (both in the weak and in the strong sense) is likely to increase the stability of the problem, again due to the uncertainty principle. As with the negative results, further research can be directed at a quantification of this effect.
We note again that increasing stability by rotating the window within the time-frequency plane has already been considered in the continuous case \cite{AYFrF}.\\

We also suggest to apply the ideas of this article to the continuous setting. The continuous phase retrieval problem is very similar to the disrcete setting in many structural aspects. In particular, the uncertainty principle is also present in the continuous case.
While it was easy to show that Theorem \ref{thm:strtw-suff} does not carry over to the continuous setting, we do not know whether Corollaries \ref{cor:sampl-hole} and \ref{cor:PR-hole} have analogues for $L^2(\R)$. In this setting, the sampling set and the ambiguity support would take the form of an infinite ``plus'' shape $([-R,R]\times\R)\cup(\R\times[-R,R])$. Even if uniqueness results can be proven, it appears highly difficult to construct a window with the given ambiguity support.\\

We close with a remark concerning the more general setting of STFT phase retrieval in arbitrary LCA groups. While the uncertainty principle is always present, it is important to note that we also took advantage of the fact that the STFT $V_{\hat{g}}f$ is well-defined since $\hat{g}\in\C^d$ (or $\hat{g}\in L^2(\R)$ in the continuous case). Our approach therefore only has a chance to be successfully carried over to another LCA group if the group in question is its own dual.

\printbibliography

\end{document}